\tikzstyle{block}=[draw opacity=0.7,line width=1.4cm]
\nonstopmode\numberwithin{equation}{section}
\newtheorem{theorem}{Theorem}[section]
\newtheorem{lemma}{Lemma}[section]
\newtheorem{example}{Example}[section]
 \theoremstyle{plain}
\newcommand{\Mod}[1]{\ (\mathrm{mod}\ #1)}
\begin{document}
\title{ Non trivial idempotents of the matrix rings over  polynomial ring $Z_{pqr}[x]$}
\maketitle
\begin{center}
\center{  $\text{Gaurav Mittal}$}
{\footnotesize
 % please put the address of the second  and third author
 \center{ Department of Mathematics, Indian Institute of Technology Roorkee, Roorkee, India} \\ email: gmittal@ma.iitr.ac.in
}

\medskip
\begin{abstract}   
In this paper, we study the  non trivial idempotents of the $2 \times 2$ matrix ring over the  polynomial ring  $\mathbb{Z}_{pqr}[x]$ for distinct primes $p, q $ and $r$ greater than $3$. We have classified all the idempotents of this matrix ring into several classes such that any idempotent must belong to one of these classes. This work is extension of the work done in $[1]$.
\end{abstract}
\end{center}
\hspace{-5mm}\subjclass {}{\textbf{Mathematics Subject Classification (2010)}: $16S50$, $13F20$}\\
\keywords{\textbf{Keywords:} Idempotent, polynomial ring, matrix ring}

\section{Introduction}
 
 In ring theory, an  element $a$ of a ring $R$ is idempotent if $a = a^2$. Then by induction, we have $a = a^2= \cdots = a^m$ for any positive integer $m$. Therefore, we can say that these elements resist to change on multiplying with themselves. With these idempotents, we can define several other classes of elements, for example, unit regular elements $[2]$, clean and strongly clean elements  $[5, 6, 7]$ and  Lie regular elements $[8]$ etc. and therefore,  idempotents are of interest among many researchers.

Now, let us discuss some of the available literature   in this direction.  In the case of polynomial ring $R[x]$, when $R$ is an abelian ring (a ring in which all idempotents are central), idempotents  of $R[x]$  and $R$ coincides $[4$, Lemma $1]$. For the  matrix rings over the  polynomial rings very few results are known.  Kanwar et al.  $[3]$ studied the idempotents in $M_2(\mathbb{Z}_{2p}[x])$ for odd prime $p$ and in $M_2(\mathbb{Z}_{3p}[x])$ for prime $p > 3$. Recently, Balmaceda et al. $[1]$ generalized the results of $[3]$ by determining the non trivial idempotents in the matrix rings $M_2(\mathbb{Z}_{pq}[x])$ for any primes $p$ and $q$.

In this paper, motivated by the importance of idempotents, we continue in this direction and study the idempotents of $2 \times 2$ matrices over the polynomial ring  $\mathbb{Z}_{pqr}[x]$ for primes  $p, q$ and $r$  greater than $3$. The special thing about  polynomial ring $\mathbb{Z}_{pqr}[x]$ is that  it is a  reduced ring (ring in which the zero element is the only nilpotent element). This paper is structured in the following manner: All the  basic results required in our work are accumulated in Section $2$. Our main result related to the idempotents is discussed in Section $3$. Section $4$ concludes the paper.

\section{Preliminaries}
We start this section by the following characterization of the reduced ring. The simple proof is given here for completeness.
\begin{lemma}
The ring   $\mathbb{Z}_n$  is a reduced ring,  if $n$ has no square factor in it.  
\end{lemma}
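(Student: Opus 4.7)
The plan is to unpack the definition of a reduced ring and use unique factorization in $\mathbb{Z}$. Assume $n$ is square-free, so we can write $n = p_1 p_2 \cdots p_k$ with the $p_i$ pairwise distinct primes. I want to show that the only nilpotent element of $\mathbb{Z}_n$ is $0$. So take $a \in \mathbb{Z}_n$ and suppose $a^m \equiv 0 \pmod{n}$ for some positive integer $m$.

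Then $n \mid a^m$, and in particular $p_i \mid a^m$ for each $i$. Since each $p_i$ is prime, $p_i \mid a$ for every $i$. Because the primes $p_1, \ldots, p_k$ are distinct, their product $n = p_1 \cdots p_k$ divides $a$, which means $a \equiv 0 \pmod{n}$. Thus $0$ is the only nilpotent in $\mathbb{Z}_n$, and $\mathbb{Z}_n$ is reduced.

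The argument is essentially a one-line consequence of unique factorization, so there is no real obstacle; the only thing to be careful about is to explicitly invoke that square-freeness is exactly what lets us pass from $p_i \mid a$ for each $i$ to $n \mid a$ (this step would fail if some $p_i^2$ divided $n$, since then $p_i \mid a$ would not upgrade to $p_i^2 \mid a$). Everything else is routine, so the writeup should be short.
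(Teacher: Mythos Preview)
Your proof is correct and follows essentially the same argument as the paper: both show that if $a^m\equiv 0\pmod n$ then each prime factor of $n$ divides $a$, and square-freeness then forces $n\mid a$. The only cosmetic difference is that the paper phrases it as a proof by contradiction (assuming a nonzero nilpotent), while you argue directly.
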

\begin{proof}

Let if possible $g$ be some non zero nilpotent element of $\mathbb{Z}_n$. Clearly $g^{m}$ is divisible by $n$ for some positive integer $m$. If  $n$ has no square factor in it, then each prime divisor of $n$ also divides $g^m$ and hence each of them divides $g$. Since every prime divisor of $n$ divides $g$ which means $n$ is a divisor of $g$ which is absurd as $g < n$. Hence result.  
 \end{proof}

\begin{theorem}
For the ring $Z_n$, when $n$ is square free,  the determinant and the trace of  every idempotent in $M_2(\mathbb{Z}_n[x])$   is in $\mathbb{Z}_{n}$.
\end{theorem}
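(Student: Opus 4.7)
The plan is to derive both conclusions from the fact that, since $n$ is square-free, Lemma 2.1 makes $\mathbb{Z}_n$ a reduced ring, and consequently the polynomial ring $\mathbb{Z}_n[x]$ is reduced as well (a nilpotent polynomial over a reduced commutative ring would have nilpotent leading coefficient, forcing it to vanish). I would handle the determinant first and the trace second, so that the trace calculation can feed on the determinant result.

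For the determinant, let $E$ be an idempotent of $M_2(\mathbb{Z}_n[x])$ and set $\delta := \det(E)$. Applying $\det$ to $E^2 = E$ gives $\delta^2 = \delta$, so $\delta$ is itself an idempotent of $\mathbb{Z}_n[x]$. Writing $\delta = \sum_{i=0}^k \delta_i x^i$ with $\delta_k \neq 0$, the reducedness of $\mathbb{Z}_n[x]$ forces $\delta_k^2 \neq 0$; hence $\deg(\delta^2) = 2k$, and $\delta^2 = \delta$ is impossible unless $k = 0$. Therefore $\delta \in \mathbb{Z}_n$. (This is essentially the coincidence of idempotents of $R$ and $R[x]$ for abelian reduced $R$ alluded to in the Introduction via [4, Lemma 1].)

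For the trace, I would use the standard $2\times 2$ identity
\[ \operatorname{tr}(A^2) = \operatorname{tr}(A)^2 - 2\det(A), \]
which comes out directly by expanding $A^2$. Applied to $E$, together with $\operatorname{tr}(E^2) = \operatorname{tr}(E)$, this yields $t^2 - t = 2\delta$, where $t := \operatorname{tr}(E)$. The first step has already placed $\delta$ in $\mathbb{Z}_n$, so the right-hand side is a constant polynomial. Now repeat the leading-coefficient argument: if $\deg(t) \ge 1$ with leading coefficient $t_m$, then $t_m^2 \neq 0$ because $\mathbb{Z}_n[x]$ is reduced, so $\deg(t^2-t) = 2\deg(t) \ge 2$, contradicting that $t^2 - t$ equals the constant $2\delta$. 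Hence $t \in \mathbb{Z}_n$.

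The only non-routine point is justifying that $\mathbb{Z}_n[x]$ inherits the reduced property from $\mathbb{Z}_n$, but this is a classical fact that follows immediately from Lemma 2.1 by inspecting leading coefficients; once it is in hand, both degree arguments are forced, and the two displayed identities do all the remaining work.
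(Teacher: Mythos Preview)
Your argument is correct. The paper itself does not give a proof here at all: it simply invokes Lemma~2.1 (so that $\mathbb{Z}_n$ is reduced) and then cites \cite{}[3, Proposition~3.1] for the conclusion. What you have written is essentially an unpacking of that cited proposition in the $2\times 2$ case: reducedness of $\mathbb{Z}_n$ passes to $\mathbb{Z}_n[x]$, the multiplicativity of $\det$ forces $\det(E)$ to be an idempotent of $\mathbb{Z}_n[x]$ and hence a constant (this is exactly Theorem~2.2 of the paper), and the Cayley--Hamilton style identity $\operatorname{tr}(A^2)=\operatorname{tr}(A)^2-2\det(A)$ reduces the trace question to another leading-coefficient degree count. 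So your route and the paper's route coincide in spirit; the difference is only that you spell out the details the paper outsources to reference~[3].
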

\begin{proof}
Follows directly from  Lemma $2.1$  and $[3,$ Proposition $3.1]$.
\end{proof}

\begin{theorem}
If $R$ is a reduced ring, then idempotents of $R[x]$ and $R$ coincide.
\end{theorem}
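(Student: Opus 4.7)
The plan is to establish the nontrivial direction of the equality, namely that every idempotent of $R[x]$ must lie in $R$; the reverse inclusion is immediate, since any $a \in R$ with $a^2 = a$ continues to satisfy $a^2 = a$ when viewed as a constant polynomial in $R[x]$. I would argue by contradiction: suppose $f(x) = a_0 + a_1 x + \cdots + a_n x^n \in R[x]$ is idempotent with $a_n \neq 0$, and assume $n \geq 1$.

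The key step is to compare the coefficient of $x^{2n}$ on the two sides of the idempotent identity $f(x) = f(x)^2$. On the left, this coefficient is $0$ since $\deg f(x) = n < 2n$. On the right, the coefficient of $x^{2n}$ in $f(x)^2$ is exactly $a_n^2$, the leading term coming from the square of the leading term of $f(x)$. Hence $a_n^2 = 0$, which says that $a_n$ is a nilpotent element of $R$. Since $R$ is reduced, the only nilpotent element is $0$, forcing $a_n = 0$ and contradicting our choice of $n$ as the actual degree of $f$. Therefore $n = 0$, and $f \in R$.

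I do not anticipate any serious obstacle: the whole argument is driven by the observation that the leading coefficient of $f^2$ is the square of the leading coefficient of $f$, and reducedness then kills it in a single stroke. In particular, there is no need to iterate over the lower-degree coefficients or to invoke any structural facts (such as the centrality of idempotents in abelian rings); everything is extracted from the comparison of the top-degree coefficient alone. The only point that deserves care is to fix $n$ as the exact degree of $f$ at the outset, so that $a_n \neq 0$ is genuinely forced and the ensuing contradiction is valid.
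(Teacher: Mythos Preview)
Your argument is correct: comparing the coefficient of $x^{2n}$ in $f = f^2$ forces $a_n^2 = 0$, and reducedness then gives $a_n = 0$, contradicting $n \geq 1$. The care you take in fixing $n$ as the exact degree is exactly what is needed, and no iteration over lower coefficients is required.

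As for comparison with the paper: there is essentially nothing to compare, since the paper does not give its own proof but simply cites \textit{[3, Corollary 2.4]} (Kanwar--Khatkar--Sharma). Your self-contained argument is therefore strictly more informative than what appears in the paper, and is in fact the standard elementary proof one would expect for this fact. If anything, your write-up could replace the bare citation with no loss.
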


\begin{proof}
$[3$, Corollary $2.4]$. 
\end{proof}
Next result is  about the number of idempotents in $\mathbb{Z}_n$. We skip its proof as it is not hard to see.
\begin{lemma}
For the ring $\mathbb{Z}_n$, where $n$ is a  positive integer, total number of idempotents are $2^m$, where $m$ is the number of distinct prime  divisors of $n$. 
\end{lemma}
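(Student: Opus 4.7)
The natural approach is to reduce to prime-power factors via the Chinese Remainder Theorem. The plan is first to write $n$ in its prime factorization $n = p_1^{a_1} p_2^{a_2} \cdots p_m^{a_m}$ with distinct primes $p_i$, and to invoke the ring isomorphism
$$\mathbb{Z}_n \cong \mathbb{Z}_{p_1^{a_1}} \times \mathbb{Z}_{p_2^{a_2}} \times \cdots \times \mathbb{Z}_{p_m^{a_m}}.$$
Since multiplication in a direct product of rings is componentwise, an element of the product is idempotent precisely when each of its coordinates is idempotent in the corresponding factor. Hence the total number of idempotents in $\mathbb{Z}_n$ factors as a product of the idempotent counts in each $\mathbb{Z}_{p_i^{a_i}}$.

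The core step, and the only place where any real argument is needed, is to show that each prime-power ring $\mathbb{Z}_{p^{a}}$ contains exactly two idempotents, namely $0$ and $1$. Suppose $e^2 \equiv e \pmod{p^a}$, so that $p^a \mid e(e-1)$. Because consecutive integers are coprime, the prime $p$ cannot divide both $e$ and $e-1$, so all $a$ copies of $p$ must sit inside a single one of the two factors; this forces either $p^a \mid e$ or $p^a \mid (e-1)$, and therefore $e \equiv 0$ or $e \equiv 1 \pmod{p^a}$.

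Combining the two ingredients immediately yields $\underbrace{2 \cdot 2 \cdots 2}_{m \text{ times}} = 2^m$ as the total number of idempotents of $\mathbb{Z}_n$. The only step requiring any care is the coprimality argument in the prime-power case; the rest is a routine application of the CRT and the definition of multiplication in a product ring, so I do not anticipate any serious obstacle.
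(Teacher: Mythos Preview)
Your argument is correct: the Chinese Remainder Theorem reduces the count to the prime-power case, and the coprimality of $e$ and $e-1$ forces each $\mathbb{Z}_{p^a}$ to have only the trivial idempotents $0$ and $1$, giving $2^m$ in total. The paper itself does not supply a proof of this lemma---it simply states that the result ``is not hard to see'' and omits the argument---so there is nothing to compare your approach against; what you have written is the standard justification and would serve perfectly well as the missing proof.
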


\section{Idempotents of matrix ring $M_2(\mathbb{Z}_{pqr}[x])$}

In this section, we discuss our main result in the form of idempotents of the ring  of $2 \times 2$ matrices over the polynomial ring $\mathbb{Z}_{pqr}[x]$, where $p, q$ and $r$ are distinct primes greater than $ 3$.  To start with, let us first
 discuss the following three lemmas which would be helpful in proving our main result. The very first lemma is a result on the  idempotents of $\mathbb{Z}_{pqr}[x]$.
\begin{lemma}

 Idempotents of the  polynomial ring $\mathbb{Z}_{pqr}[x]$, where  $p, q, r$ are distinct primes, are  $$ 0, \ 1,\ (pq)^{r-1},  \ (pr)^{q-1},\ (qr)^{p-1}, \ p^{(q-1)(r-1)},\ q^{(p-1)(r-1)},\ r^{(p-1)(q-1)}.$$

\end{lemma}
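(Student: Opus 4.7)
The plan is to reduce the question to counting and identifying idempotents in the coefficient ring $\mathbb{Z}_{pqr}$ itself, and then to recognize the eight listed elements as the preimages under the Chinese Remainder isomorphism of the eight idempotents of $\mathbb{Z}_p \times \mathbb{Z}_q \times \mathbb{Z}_r$.

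First I would invoke Lemma $2.1$: since $pqr$ is square-free, $\mathbb{Z}_{pqr}$ is reduced. By Theorem $2.3$, the idempotents of $\mathbb{Z}_{pqr}[x]$ therefore coincide with those of $\mathbb{Z}_{pqr}$, so it suffices to describe the idempotents of $\mathbb{Z}_{pqr}$. By Lemma $2.4$, there are exactly $2^3 = 8$ such idempotents.

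Next I would exhibit the eight candidates and verify directly that they are idempotent. The Chinese Remainder Theorem gives
\[
\mathbb{Z}_{pqr}\;\cong\;\mathbb{Z}_p\times\mathbb{Z}_q\times\mathbb{Z}_r,
\]
under which the idempotents of the right-hand side are the $8$ triples with entries in $\{0,1\}$. I would match each of the listed elements to one of these triples by reducing modulo $p$, $q$ and $r$ and applying Fermat's little theorem. For instance, $(qr)^{p-1}$ is $0$ modulo $q$ and modulo $r$ (as $q$ and $r$ divide the base), and $\equiv 1 \pmod p$ by Fermat, so it maps to $(1,0,0)$. Similarly $(pr)^{q-1}\mapsto(0,1,0)$ and $(pq)^{r-1}\mapsto(0,0,1)$. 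For $p^{(q-1)(r-1)}$, the reduction mod $p$ is $0$, while mod $q$ we get $(p^{q-1})^{r-1}\equiv 1$ and mod $r$ we get $(p^{r-1})^{q-1}\equiv 1$, giving the triple $(0,1,1)$; the cases $q^{(p-1)(r-1)}\mapsto(1,0,1)$ and $r^{(p-1)(q-1)}\mapsto(1,1,0)$ are analogous. Together with $0\mapsto(0,0,0)$ and $1\mapsto(1,1,1)$ this accounts for all $8$ CRT-idempotents.

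Finally, since the eight listed elements map to distinct triples, they are pairwise distinct in $\mathbb{Z}_{pqr}$, and since Lemma $2.4$ guarantees exactly eight idempotents, the list is complete. I do not foresee any real obstacle: the only non-routine ingredient is the CRT/Fermat verification, which is mechanical once one commits to checking residues prime by prime. The main thing to be careful about is making sure each exponent is large enough to trigger Fermat in every relevant factor (which is why, e.g., the exponent $(q-1)(r-1)$ appears on the base $p$), and that the three ``one-prime-base'' cases and the three ``two-prime-base'' cases are matched to the correct CRT triples so that no idempotent is repeated and none is missed.
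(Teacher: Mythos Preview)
Your proposal is correct and follows essentially the same route as the paper: both reduce to idempotents of $\mathbb{Z}_{pqr}$ via the reduced-ring fact, count them as $2^3=8$, and identify them through CRT together with Fermat/Euler, the only cosmetic difference being that the paper \emph{constructs} each idempotent from its residue triple while you \emph{verify} the given list against the triples and invoke the count for completeness. Note that in the paper's numbering the results you cite are Theorem~2.2 and Lemma~2.2 (theorems and lemmas are numbered independently there), not Theorem~2.3 and Lemma~2.4.
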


\begin{proof}
From Theorem $2.2$ and Lemma $2.2$,  it follows that the ring $\mathbb{Z}_{pqr}$ has $8$ idempotents. Now, let $y$ be any  idempotent of $\mathbb{Z}_{pqr}$. This means $y^2 \equiv y \Mod{pqr}$. Solving  this congruence is equivalent to solve the following system of congruences $$y^2 \equiv y \Mod{p}, \quad y^2 \equiv y \Mod{q}, \quad y^2 \equiv y \Mod{r}.$$ Each of these congruences has $2$ solutions which are given by $y \equiv 0$ or $y \equiv 1$. So the congruence $y^2 \equiv y \Mod{pqr}$ has $8$ solutions   discussed in the following cases. 

Case-$1$:   $y \equiv 0 \Mod{p},\ \ y \equiv 0 \Mod{q}, \  \  y \equiv 0 \Mod{r}$.
Here, clearly $y \equiv 0 \Mod{pqr}$.

Case-$2$:   $y \equiv 0 \Mod{p},\ \ y \equiv 0 \Mod{q}, \  \  y \equiv 1 \Mod{r}$. This means
 $y \equiv 0 \Mod{pq}$ and $ y \equiv 1 \Mod{r}$. Further, $y \equiv 0 \Mod{pq}$ implies $y = pq K$ for some $K \in \mathbb{Z}$. Putting this value of $y$ in $ y \equiv 1 \Mod{r}$, we get $pqK \equiv 1 \Mod{r}$. From this congurence, on employing Euler's equation,  we get a solution  given by $K = (pq)^{r-2}$. Thus, $y \equiv (pq)^{r-1} \Mod{pqr}$.

Case-$3$:   $y \equiv 0 \Mod{p},\ \ y \equiv 1 \Mod{q}, \  \  y \equiv 0 \Mod{r}$. On the similar lines of Case $2$, we can easily see that  $y \equiv (pr)^{r-1} \Mod{pqr}$.

Case-$4$:    $y \equiv 1 \Mod{p},\ \ y \equiv 0 \Mod{q}, \  \  y \equiv 0 \Mod{r}$. Here  $y \equiv (qr)^{r-1} \Mod{pqr}$.

Case-$5$: If   $y \equiv 0 \Mod{p},\ \ y \equiv 1 \Mod{q}, \  \  y \equiv 1 \Mod{r}$.
Then, clearly $ y \equiv 1 \Mod{qr}$. The congruence $y \equiv 0 \Mod{p}$ implies that $y = p K$ for some $K \in \mathbb{Z}$. Substituting the value of $y$ in $ y \equiv 1 \Mod{qr}$, we get $pK \equiv 1 \Mod{qr}$. From this congurence, on employing Euler's theorem,  we get a solution  given by $K = p^{(q-1)(r-1)-1}$. Thus, $y \equiv p^{(q-1)(r-1)} \Mod{pqr}$.

Case-$6$: If   $y \equiv 1 \Mod{p},\ \ y \equiv 0 \Mod{q}, \  \  y \equiv 1 \Mod{r}$.
As done in Case $5$, we can see that $y \equiv q^{(p-1)(r-1)} \Mod{pqr}$.

Case-$7$: If   $y \equiv 1 \Mod{p},\ \ y \equiv 1 \Mod{q}, \  \  y \equiv 0 \Mod{r}$.
Here,  $y \equiv r^{(p-1)(q-1)} \Mod{pqr}$.

Case-$8$: If $y \equiv 1 \Mod{p},\ \ y \equiv 1 \Mod{q}, \  \  y \equiv 1 \Mod{r}$. Clearly,  $y \equiv 1 \Mod{pqr}$. 
\end{proof}
\begin{example} For $\mathbb{Z}_{105} = \mathbb{Z}_{3 \times 5 \times 7}$, $p = 3, q= 5, r = 7$, idempotents  are $0, \ 1,\ 15^6 \equiv 15,\ 21^4 \equiv 21, \ 35^2 \equiv 70,\ 3^{24} \equiv 36,\ 5^{12} = 85,\ 7^{8} = 91$.   

\end{example}

\textbf{Notation}-  Now onwards, if we write $G = \begin{bmatrix} e & f \\ g & h \end{bmatrix}$, then we always assume $e, f, g, h$ are polynomials in $\mathbb{Z}_{pqr}[x]$. \newline
In the following lemma, we discuss about the solutions of the quadratic congruence $x^2   \equiv x+2p^{(q-1)(r-1)} \Mod{pqr}$.
 \begin{lemma}
 For the quadratic congruence $x^2   \equiv x+2p^{(q-1)(r-1)} \Mod{pqr}$, where $p, q, r$ are distinct odd primes, solutions  modulo $pqr$ are $$\bigg\{2p^{(q-1)(r-1)}, \ p^{(q-1)(r-1)}+1, \ -p^{(q-1)(r-1)},\ 1-2p^{(q-1)(r-1)}, \  \big((-1-2p^{q-1}\big)(pq)^{r-1} + 2 p^{q-1}), $$  $$ \big((-2-p^{q-1})(pq)^{r-1}+p^{q-1}+1\big),  \ ((-1-2p^{r-1})(pr)^{q-1}+2p^{r-1}), \ \big((-2-p^{r-1})(pr)^{q-1}+p^{r-1}+1\big)\bigg\}. $$  

\end{lemma}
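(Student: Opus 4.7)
The plan is to apply the Chinese Remainder Theorem and reduce the congruence $x^2 \equiv x + 2p^{(q-1)(r-1)} \pmod{pqr}$ to three local congruences modulo the primes $p$, $q$, $r$. Setting $u = p^{(q-1)(r-1)}$, Fermat's little theorem gives $u = (p^{q-1})^{r-1} \equiv 1 \pmod q$ and $u = (p^{r-1})^{q-1} \equiv 1 \pmod r$, while trivially $u \equiv 0 \pmod p$. Hence $2u$ reduces to $0$, $2$, $2$ modulo $p$, $q$, $r$ respectively, and the three local congruences become
$$x^2 \equiv x \pmod p, \qquad (x-2)(x+1) \equiv 0 \pmod q, \qquad (x-2)(x+1) \equiv 0 \pmod r.$$
Since $p, q, r$ are distinct primes greater than $3$, the three local solution sets $\{0, 1\}$, $\{2, -1\}$, $\{2, -1\}$ each have exactly two elements, giving $2 \cdot 2 \cdot 2 = 8$ admissible triples and hence $8$ residues modulo $pqr$.

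Next, I would realize each admissible triple as an explicit element of $\mathbb{Z}_{pqr}$. By Lemma $3.1$, $u$ is itself an idempotent, with $u \equiv 1 \pmod{qr}$ and $u \equiv 0 \pmod p$. For the four triples $(a, b, c)$ with $a \in \{0, 1\}$ and $b = c \in \{2, -1\}$, the linear combination $x \equiv a + (b-a)u \pmod{pqr}$ immediately produces the first four listed solutions $2u$, $u+1$, $-u$, $1 - 2u$. For the remaining four triples (with $b \ne c$), I would use the ``partial idempotent'' $p^{q-1}$---which is $0 \pmod p$, $1 \pmod q$, and unconstrained modulo $r$---together with the true idempotent $(pq)^{r-1}$, which is $0 \pmod{pq}$ and $1 \pmod r$. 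An ansatz of the form $\alpha\, p^{q-1} + \beta + \lambda (pq)^{r-1}$ has mod-$p$ residue $\beta$ and mod-$q$ residue $\alpha + \beta$; the parameter $\lambda$ is then forced by the desired mod-$r$ residue. Choosing $(\alpha, \beta) = (2, 0)$ and $(\alpha, \beta) = (1, 1)$ yields solutions $5$ and $6$, and the symmetric ansatz $\alpha\, p^{r-1} + \beta + \lambda (pr)^{q-1}$ (swapping the roles of $q$ and $r$) yields solutions $7$ and $8$.

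The whole argument rests on nothing deeper than Fermat's little theorem and the Chinese Remainder Theorem, so no step is mathematically difficult. The only real labor is bookkeeping: for each of the eight closed-form expressions in the statement, the residues modulo $p$, $q$, and $r$ must be computed and confirmed to hit the correct target triple. For instance, one checks $(-1 - 2p^{q-1})(pq)^{r-1} + 2p^{q-1}$ reduces to $0 \pmod p$, $2 \pmod q$, and $-1 \pmod r$ by reducing each factor separately, and similarly for the other seven.
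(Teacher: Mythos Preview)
Your proposal is correct and follows essentially the same route as the paper: reduce via the Chinese Remainder Theorem to the three local congruences $x^2\equiv x\Mod p$ and $x^2\equiv x+2\Mod q$, $\Mod r$, obtain the local solution sets $\{0,1\}$, $\{2,-1\}$, $\{2,-1\}$, and lift each of the eight triples back to $\mathbb{Z}_{pqr}$. Your description of the lifting step (building the explicit representatives from the idempotent $u=p^{(q-1)(r-1)}$ and the auxiliary element $p^{q-1}$) is more systematic than the paper's case-by-case ``on solving these, we get \ldots'', but the underlying argument is the same.
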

\begin{proof}
Since $\gcd (p,q, r) = 1$, the congruence  $x^2   \equiv x+2p^{(q-1)(r-1)} \Mod{pqr}$ has solution if and only if the system of congruences  $x^2   \equiv x+2p^{(q-1)(r-1)} \equiv x \Mod{p}$,    $x^2   \equiv x+2p^{(q-1)(r-1)}  \Mod{q}$ and $x^2   \equiv x+2p^{(q-1)(r-1)} \Mod{r}$ has a solution. Let's name these congruencs as $(a)$,  $(b)$ and $(c)$,  respectively. Observe that $(b)$ is also equivalent to $x^2 \equiv x+2 \Mod{q}$ using Euler's theorem. Similarly, $(c)$ is equivalent to $x^2 \equiv x+2 \Mod{r}$.    So for solving the given congruence, we have to solve the following equations $$ x^2\equiv x  \Mod{p}, \quad x^2\equiv x+2 \Mod{q} \quad \text{and} \quad   x^2 \equiv x+2 \Mod{r}.$$ Further, note that both  the congruences  $x^2\equiv x+2 \Mod{q}$ and $x^2\equiv x+2 \Mod{r}$ have two solutions, given by $x = 2$ or $x = -1$. Similarly,  the congruence $x^2\equiv x \Mod{p}$ has two solutions, given by $x = 0$ or $x = 1$. We consider all these possibilities and discuss them in the following eight  cases.  \\
Case-1: When $x \equiv  0\Mod{p}$, \  $x \equiv  2\Mod{q}$ and $x \equiv  2\Mod{r}$. On solving these, we get $x \equiv  2p^{(q-1)(r-1)}\Mod{pqr}$. \\
Case-2: When $x \equiv  1\Mod{p}$, \  $x \equiv  2\Mod{q}$ and $x \equiv  2\Mod{r}$. Here we get,  $x \equiv  1+p^{(q-1)(r-1)}\Mod{pqr}$.\\
Case-3: When $x \equiv  0\Mod{p}$, \  $x \equiv  -1\Mod{q}$ and $x \equiv  -1\Mod{r}$. On solving these, we get  $x \equiv  -p^{(q-1)(r-1)}\Mod{pqr}$.  \\
Case-4: When $x \equiv  1\Mod{p}$, \  $x \equiv  -1\Mod{q}$ and $x \equiv  -1\Mod{r}$. Here we get,  $x \equiv  1-2p^{(q-1)(r-1)}\Mod{pqr}$.\\
Case-5: When $x \equiv  0\Mod{p}$, \  $x \equiv  2\Mod{q}$ and $x \equiv  -1\Mod{r}$. Here we get,  $x \equiv  (-1-2p^{q-1}\big)(pq)^{r-1} + 2 p^{q-1}\Mod{pqr}$.\\
Case-6:  When $x \equiv  1\Mod{p}$, \  $x \equiv  2\Mod{q}$ and $x \equiv  -1\Mod{r}$. On solving these, we get $x \equiv  (-2-p^{q-1}\big)(pq)^{r-1} +  p^{q-1} + 1\Mod{pqr}$. \\
Case-7: When $x \equiv  0\Mod{p}$, \  $x \equiv  -1\Mod{q}$ and $x \equiv  2\Mod{r}$. In this case,  $x \equiv  (-1-2p^{r-1}\big)(pr)^{q-1} + 2 p^{r-1} \Mod{pqr}$.\\ 
Case-8:  When $x \equiv  1\Mod{p}$, \  $x \equiv  -1\Mod{q}$ and $x \equiv  2\Mod{r}$. Here  $x \equiv  (-2-p^{r-1}\big)(pr)^{q-1} +  p^{r-1} + 1\Mod{pqr}$.  
\end{proof}
In the following lemma, we discuss about the solutions of the quadratic congruence $x^2   \equiv x+2(pq)^{r-1} \Mod{pqr}$.
\begin{lemma}
For the quadratic congruence $x^2   \equiv x+2(pq)^{r-1} \Mod{pqr}$, where $p, q, r$ are distinct odd primes, solutions  modulo $pqr$ are $$\bigg\{2(pq)^{r-1}, \ -(pq)^{r-1}, \ (pq)^{r-1} + 1 ,\ 1-2(pq)^{r-1}, \ (2-p^{q-1}\big)(pq)^{r-1} + p^{q-1}, $$  $$ (-1-p^{q-1}\big)(pq)^{r-1}+p^{q-1}, \ (2-q^{p-1})(pq)^{r-1} + q^{p-1}, \ (-1-q^{p-1})(pq)^{r-1} + q^{p-1}\bigg\}. $$  

\end{lemma}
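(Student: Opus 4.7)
The plan is to proceed exactly as in Lemma 3.2 via the Chinese Remainder Theorem. Since $\gcd(p,q,r)=1$, the congruence $x^2 \equiv x + 2(pq)^{r-1} \pmod{pqr}$ is equivalent to the simultaneous system modulo $p$, $q$ and $r$. Modulo $p$ and modulo $q$ the term $2(pq)^{r-1}$ vanishes, so both reduce to $x^2 \equiv x$, which has solutions $x \equiv 0$ or $x \equiv 1$. Modulo $r$, Fermat's little theorem gives $(pq)^{r-1} \equiv 1$, so the congruence reduces to $x^2 \equiv x + 2 \pmod r$, which factors as $(x-2)(x+1) \equiv 0$ and yields $x \equiv 2$ or $x \equiv -1$.

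The second step is to enumerate the $2 \times 2 \times 2 = 8$ combinations of residues and reconstruct the solution modulo $pqr$ in each case. The four ``parallel'' cases, where the residues modulo $p$ and $q$ coincide (both $0$ or both $1$), are the easiest: the quantity $(pq)^{r-1}$ already behaves like the appropriate indicator, being $\equiv 0 \pmod{pq}$ and $\equiv 1 \pmod r$. A direct computation then identifies the four solutions $2(pq)^{r-1}$, $-(pq)^{r-1}$, $(pq)^{r-1}+1$ and $1-2(pq)^{r-1}$ with the cases $(0,0,2)$, $(0,0,-1)$, $(1,1,2)$ and $(1,1,-1)$ respectively.

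The four ``mixed'' cases require separating $p$ from $q$. Here we use that by Fermat $p^{q-1} \equiv 0 \pmod p$ and $p^{q-1} \equiv 1 \pmod q$, while $q^{p-1} \equiv 1 \pmod p$ and $q^{p-1} \equiv 0 \pmod q$; both of these also vanish modulo neither $p$ nor $q$ but are nonzero modulo $r$, so they must be paired with a correcting multiple of $(pq)^{r-1}$ to hit the desired residue modulo $r$. Writing the ansatz $x = \alpha (pq)^{r-1} + \beta$, with $\beta \in \{p^{q-1}, q^{p-1}\}$ fixing the residues modulo $p$ and $q$, reduces to solving a single linear equation $\alpha + \beta \equiv 2 \pmod r$ or $\alpha + \beta \equiv -1 \pmod r$, yielding $\alpha = 2-\beta$ or $\alpha = -1-\beta$. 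This produces exactly the four remaining expressions listed in the lemma.

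The only real obstacle is the bookkeeping: after writing down each compound expression one must verify that it collapses to the intended residue modulo each of $p$, $q$ and $r$ separately. This is purely mechanical (each term either kills off or survives by Fermat), so no conceptual difficulty arises, and the eight explicit representatives exhaust the solution set by CRT.
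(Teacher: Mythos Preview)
Your proposal is correct and follows exactly the approach the paper intends: the paper's own proof of this lemma is simply the one line ``This can be proved on the similar lines of Lemma~3.2,'' and your CRT reduction to $x^2\equiv x\pmod p$, $x^2\equiv x\pmod q$, $x^2\equiv x+2\pmod r$ followed by the eight-case enumeration is precisely that argument carried out in full. Your ansatz $x=\alpha(pq)^{r-1}+\beta$ for the mixed cases is a tidy way to organize the CRT reconstruction and in fact gives more detail than the paper itself supplies.
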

\begin{proof}
This can be proved on the similar lines of Lemma $3.2$. 
\end{proof}

Now, we are ready to give our  main result of the paper in which we classify all the  idempotents of $M_2(\mathbb{Z}_{pqr}[x])$. 

\begin{theorem}
  Any non trivial idempotent of the matrix ring $M_2$ over the polynomial ring $\mathbb{Z}_{pqr}[x]$  for distinct  primes $p, q, r $ greater than $3$, is one of the  following forms:
 
 \begin{enumerate} \item If $\det G = 0$, then we have the following possibilities:

 \begin{enumerate}
 \item  $ G = \begin{bmatrix}
 e(x) & f(x) \\ g(x) & 1-e(x)
 \end{bmatrix}$, where $e(x)(1-e(x))- g(x)f(x) = 0$. 

 \item  $ G = \begin{bmatrix}
 Ie(x) & I f(x) \\ I g(x) & I( 1-e(x))
 \end{bmatrix}$, where $e(x)(1-e(x))- g(x)f(x) = J k(x)$ for $k(x) \in \mathbb{Z}_{pqr}[x]$, where $I \in \{(pq)^{r-1},  (pr)^{q-1}, (qr)^{p-1}, p^{(q-1)(r-1)},  q^{(p-1)(r-1)}, r^{(p-1)(q-1)}\} $ and $J \in \{r,  q, p, qr,  pr, pq\}$. Here, value of $J$ at $i^{th}$ position in its set of possibilities corresponds to the value of $I$ at $i^{th}$ position in its respective set for $1 \leq i \leq 6$. 
  
 \end{enumerate}

 \item If $\det G = (pq)^{r-1}$, then we have the following four possibilities:
 
 \begin{enumerate}
 \item  $ G = \begin{bmatrix}
 (pq)^{r-1} & 0 \\ 0 & (pq)^{r-1} 
 \end{bmatrix}$. 

 \item  $ G = \begin{bmatrix}
 1+re(x) & rf(x) \\ rg(x) & (pq)^{r-1}  - re(x)
 \end{bmatrix}$, where $e(x)(1+re(x))+ rg(x)f(x) = pq k(x)$ for $k(x) \in \mathbb{Z}_{pqr}[x]$. 
 
 \item   $G = \begin{bmatrix}
u+pre(x) & prf(x) \\ pr g(x) & t-(u+pre(x))
\end{bmatrix}$, where $(u+pre(x))t-(u+pre(x))^2-(pr)^2f(x)g(x) \equiv (pq)^{r-1} \Mod{pqr}$,  $u \equiv 0\Mod{p}$ and $u \equiv 1 \Mod{r}$. Here $t = (2-p^{q-1})(pq)^{r-1}+p^{q-1}$.  
 
 \item By interchanging  the role of $p$ and $q$ in part $2(c)$, we get a new class of idempotents.  
 \end{enumerate}
 Similarly, if $\det G = (qr)^{p-1}$ or $\det G = (pr)^{q-1}$, we have $4$ possibilities in  each case.
 \item If $\det G = p^{(q-1)(r-1)}$, then we have the following $2$ possibilities:
 \begin{enumerate}
 \item  $ G = \begin{bmatrix}
 p^{(q-1)(r-1)} & 0 \\ 0 & p^{(q-1)(r-1)} 
 \end{bmatrix}$.
 
 \item  $ G = \begin{bmatrix}
 1+qre(x) & qrf(x) \\ qrg(x) & p^{(q-1)(r-1)}-qre(x)
 \end{bmatrix}$, where $e(x)(1+qre(x))+qrf(x)g(x) = p k(x)$ for $k(x) \in \mathbb{Z}_{pqr}[x]$.
 
 \end{enumerate}
 Similarly, if $\det G = q^{(p-1)(r-1)}$ or $\det G = r^{(p-1)(q-1)}$, we have $2$ possibilities in  each case.  
 
\end{enumerate}   
\end{theorem}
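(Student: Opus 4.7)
The plan is to use the trace $t = \operatorname{tr}(G)$ and determinant $d = \det(G)$, which lie in $\mathbb{Z}_{pqr}$ by Theorem~2.2, as the main handles. Applying $\det$ to $G = G^2$ gives $d^2 = d$, so $d$ is one of the eight idempotents classified in Lemma~3.1; the case $d = 1$ forces $G = I$ (invert $G^2 = G$) and is trivial, leaving exactly the seven determinant branches of the theorem. The Cayley--Hamilton identity $G^2 - tG + dI = 0$ combined with $G^2 = G$ yields the key matrix equation $(t-1)G = dI$, whose trace and determinant give the two necessary scalar relations
\[
t^2 - t - 2d \equiv 0 \pmod{pqr}, \qquad d\bigl((t-1)^2 - d\bigr) \equiv 0 \pmod{pqr}.
\]

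For each admissible $d$ I would solve the first congruence by CRT on $p, q, r$; Lemmas~3.2 and~3.3 record the eight candidate traces for the representatives $d = p^{(q-1)(r-1)}$ and $d = (pq)^{r-1}$, and the other determinants follow by permuting the primes. The second congruence then filters these candidates. For $d = 0$ it is vacuous, so every idempotent value of $t$ is admissible: $t = 0$ gives $G = 0$ (discarded), $t = 1$ gives case $(1)(a)$, and the six non-trivial idempotent traces give the six sub-forms of $(1)(b)$. For a two-prime determinant such as $d = (pq)^{r-1}$ the filter forces $t \equiv 2$ modulo the single prime where $d \equiv 1$, eliminating four candidates because the alternative would require $3d \equiv 0 \pmod{pqr}$ -- precisely where the hypothesis $r > 3$ is used -- and what survives are the four traces of $(2)(a)$--$(d)$. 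For a one-prime determinant such as $d = p^{(q-1)(r-1)}$ the same filter acts at both primes where $d \equiv 1$, leaving the two traces of $(3)(a)$--$(b)$.

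For each surviving pair $(d, t)$, the equation $(t-1)G = dI$ dictates the entrywise shape of $G$: each off-diagonal entry must lie in $\operatorname{ann}(t-1) \cdot \mathbb{Z}_{pqr}[x]$, and each diagonal entry is a fixed particular solution of $(t-1)x \equiv d$ plus an element of the same annihilator. Computing these annihilators via CRT reproduces exactly the prefactors $I$, $r$, $pr$, $qr$ appearing in the theorem; for instance $\operatorname{ann}\bigl((pq)^{r-1}\bigr) = r\mathbb{Z}_{pqr}$ yields $(2)(b)$, while for $(2)(c)$ one finds $\operatorname{ann}(t-1) = pr\mathbb{Z}_{pqr}$ and obtains the constant $u$ (with $u \equiv 0 \pmod p$, $u \equiv 1 \pmod r$) as a particular solution. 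With the shape fixed, the only remaining idempotency constraint is $\det G = d$, and after expanding and cancelling the common factor $r$, $pr$, or $qr$ this collapses to the compact polynomial identities stated in each subcase, for example $e(1-e) - gf = Jk(x)$ in $(1)(b)$ or $e(1 + qre) + qrfg \equiv 0 \pmod p$ in $(3)(b)$.

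The main obstacle will be the last step: organising the annihilator and particular-solution computations across the seven determinant branches while handling zero-divisors correctly, since division by $r$, $pr$, or $qr$ in $\mathbb{Z}_{pqr}$ is valid only modulo the complementary primes. The permutation symmetry $p \leftrightarrow q \leftrightarrow r$ reduces the actual labour by roughly a factor of three, so only a couple of representative cases need a full calculation; the rest follow by relabelling, as the theorem's closing sentences make explicit for the other two-prime and one-prime determinants.
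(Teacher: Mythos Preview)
Your proposal is correct and follows the same overall architecture as the paper: branch on the idempotent value of $d=\det G$, solve the trace quadratic $t^2-t-2d\equiv 0$ via CRT (this is exactly what Lemmas~3.2 and~3.3 do), discard impossible traces, and then read off the entrywise shape together with the residual determinant condition.

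The one genuine difference is organisational. The paper works directly with the four entrywise equations $e^2+fg=e$, $f(e+h)=f$, $g(e+h)=g$, $fg+h^2=h$; it eliminates bad trace values case by case, for instance by multiplying $e^2+fg=e$ through by $(2(pq)^{r-1})^2$ and arriving at $3(pq)^{2(r-1)}\equiv 0$, which contradicts $r>3$. You instead derive everything from the single Cayley--Hamilton identity $(t-1)G=dI$: its off-diagonal entries give $f,g\in\operatorname{ann}(t-1)$, its diagonal entries give the particular solutions for $e,h$, and taking determinants yields the filter $d\bigl((t-1)^2-d\bigr)\equiv 0$, which kills precisely the traces the paper rules out by hand (the ``$3d\equiv 0$'' obstruction is the same computation viewed modulo the prime where $d\equiv 1$). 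This packaging is cleaner and makes the role of the hypothesis $p,q,r>3$ transparent, but the underlying arithmetic is identical to the paper's.

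One small remark: your sentence ``$\operatorname{ann}\bigl((pq)^{r-1}\bigr)=r\mathbb{Z}_{pqr}$ yields $(2)(b)$'' refers to $\operatorname{ann}(t-1)$ with $t-1=(pq)^{r-1}$, whereas in case $(1)(b)$ the relevant annihilator is $\operatorname{ann}(I-1)=I\,\mathbb{Z}_{pqr}$ for an idempotent $I$; both are correct, but be careful in the write-up to distinguish $\operatorname{ann}(I)$ from $\operatorname{ann}(I-1)$, since they are complementary ideals.
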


\begin{proof}
Let $G  = \begin{bmatrix}
e(x) & f(x) \\ g(x) & h(x)
\end{bmatrix}$ be a non trivial idempotent of $M_2(\mathbb{Z}_{pqr}[x])$. We write $e, f, g, h$ in place of $e(x), f(x), g(x), h(x)$, respectively. Being an idempotent, $G$ satisfies $G^2 = G$ which gives us the set of equations $A = \{e^2+fg = e, f(e+h) = f, g(e+h) = g, fg+h^2 = h\}$. Consider  $(i) $  $e^2 +fg = e$, $(ii)$  $f(e+h) = f$, $(iii)$  $g(e+h) = g$ and $(iv)$  $fg+h^2 = h$.  From Theorems $2.1$ and $2.2$, we know that $\det  G$ is  also an idempotent of $\mathbb{Z}_{pqr}$. Thus, employ Lemma $3.1$ to obtain the   possible choices of $\det G$ given by $$\{ 0, \ 1,\ (pq)^{r-1},  \ (pr)^{q-1},\ (qr)^{p-1}, \ p^{(q-1)(r-1)},\ q^{(p-1)(r-1)},\ r^{(p-1)(q-1)}\}.$$  We consider all of  these  possibilities  in the following cases:

Case-1:  $\det G = 1$.  This means $G$ is invertible and as $G$ is an idempotent, on multiplying  the equation $G^2 = G$ by $G^{-1}$, we get $G = I$. Therefore, this case yields only trivial idempotent.

Case 2: $\det G = 0$. Let us first show that in this case, $e+h$ is an idempotent of $\mathbb{Z}_{pqr}$. $\det G = 0$. In this case, $eh-fg = 0$.    For this consider $(e+h)^2 = e^2+h^2+2eh = e^2+h^2 + 2fg$. Now add equations $(i)$ and $(iv)$, and using these  to deduce that $(e+h)^2 = e+h$, which means that $e+h$ is an idempotent. Therefore, it can take the following values: $$\{ 0, \ 1,\ (pq)^{r-1},  \ (pr)^{q-1},\ (qr)^{p-1}, \ p^{(q-1)(r-1)},\ q^{(p-1)(r-1)},\ r^{(p-1)(q-1)}\}.$$ Now we discuss all these possibilities one by one. 
\begin{enumerate}
\item If $e+h = 0$.  In this case, $\det  G = 0$ and $(i)$ implies $e = 0$. From $(ii)$ and $(iii)$, we get $f = g = 0$. Similarly $h = 0$. Hence, $G$ can only be $0$ matrix here. As we are in hunt of non trivial idempotents, we  reject this case.

\item If $e+h = 1$.  Clearly, $h = 1-e$. Employing $\det G = 0$ and trace condition, we can easily verify that  the equations $(i), (ii), (iii)$ are trivially satisfied. Thus, we have $G = \begin{bmatrix}
e(x) & f(x) \\ g(x) & 1-e(x)
\end{bmatrix}$, where $e(x)(1-e(x))-g(x)f(x) = 0$.

\item  If $e+h = (pq)^{r-1}$. Clearly $h = (pq)^{r-1}-e$. Employing $\det G = 0$ and $(i)$, i.e.  $e^2+fg = e$, we get $ e = e(pq)^{r-1}$.  Also from  $(ii)$ and $(iii)$, we have $f = f(pq)^{r-1}$ and $g = g(pq)^{r-1}$. Hence,  $G = \begin{bmatrix}
 e(x)(pq)^{r-1} & f(x)(pq)^{r-1} \\ g(x)(pq)^{r-1} & (pq)^{r-1}(1-e(x))
\end{bmatrix}$, where $e(x)(1-e(x))-g(x)f(x) =  r k(x)$ for some $k(x) \in \mathbb{Z}_{pqr}[x]$. Further, when $e+h = (qr)^{p-1}$ or $e+h = (pr)^{q-1}$, $G$ can be obtained in a  similar way.

\item If $e+h = p^{(q-1)(r-1)}$. Then $\det G = 0$ and $(i)$ implies $ep^{(q-1)(r-1)} = e$. Also from $(ii)$ and $(iii)$, we get $fp^{(q-1)(r-1)} = f$ and $gp^{(q-1)(r-1)} = g$. Thus in this case, $G = \begin{bmatrix}
 p^{(q-1)(r-1)}e(x) &  p^{(q-1)(r-1)} f(x) \\  p^{(q-1)(r-1)} g(x) &  p^{(q-1)(r-1)}(1-e(x))

\end{bmatrix}$, where $e(x)(1-e(x))-f(x)g(x) = pqk(x)$ for $k(x) \in \mathbb{Z}_{pqr}[x]$. Further,  when $e+h = q^{(p-1)(r-1)}$ or $e+h = r^{(p-1)(q-1)}$, $G$ can be obtained in a  similar way. 
\end{enumerate}

Case 3:  $\det G = (pq)^{r-1}$. Here $(i), (iv)$ and $\det G = (pq)^{r-1}$ implies $(e+h)^2 = e+h+ 2(pq)^{r-1}$. On incorporating Lemma $3.3$, we get $8$ possibilities for $e+h$ which are discussed in the following points:
\begin{enumerate}

\item If $e+h = 2(pq)^{r-1}$, then from equations $(ii)$ and $(iii)$, we get $2(pq)^{r-1}f = f$ and $ \ 2(pq)^{r-1}g = g$. Now as $\gcd(2(pq)^{r-1}-1, pqr) = 1$, we have $f = g = 0$. So equations $(i)$ and $(iv)$ implies $e^2 = e$ and $h^2 = h$. It can be easily seen that the only possible value of $e$ and $h$  in this case is $2(pq)^{r-1}$. Thus, $G = \begin{bmatrix}
(pq)^{r-1} & 0 \\ 0 & (pq)^{r-1}
\end{bmatrix}$.

\item If $e+h = -(pq)^{r-1}$, then from equations $(ii)$ and $(iii)$, we get $(1+(pq)^{r-1})f = 0$ and $ \ g(1+ (pq)^{r-1}) = 0$. Again as $\gcd(1+(pq)^{r-1}, pqr) = 1$, we have $f = g = 0$. So equations $(i)$ and $(iv)$ implies $e^2 = e$ and $h^2 = h$. It can be verified that  no two idempotents in $\mathbb{Z}_{pqr}$ have sum $-(pq)^{r-1}$. Thus, this case is not possible.

\item If $e+h = (pq)^{r-1}+1$, then $\det G = (pq)^{r-1}$ and $(i)$ implies $(pq)^{r-1} e = (pq)^{r-1}$. Also $(ii)$ and $(iii)$ implies $(pq)^{r-1} f = 0$ and $(pq)^{r-1}g = 0$. Thus from these equations, we get $G = \begin{bmatrix}
1+re(x) & rf(x) \\ rg(x) & (pq)^{r-1} -re(x)
\end{bmatrix}$, where $e(x)(1+re(x))+rf(x)g(x) = pq k(x)$ for some $k(x) \in  \mathbb{Z}_{pqr}[x]$. 

\item If $e+h = 1-2(pq)^{r-1}$, then $\det G = (pq)^{r-1}$ and $(i)$ implies $-2(pq)^{r-1} e = (pq)^{r-1}$. From equations $(ii)$ and $(iii)$, we get $2(pq)^{r-1})f = 0$ and $ \ 2(pq)^{r-1}g  = 0$. Multiply by $(2(pq)^{r-1})^2$ on both sides of $e^2+fg = e$ , we get $(2(pq)^{r-1}e)^2 + (2(pq)^{r-1}f)(2(pq)^{r-1}g) = (2(pq)^{r-1})^2e$. Using above equations, we get $$\big((pq)^{r-1}\big)^2 -2(pq)^{r-1}(-(pq)^{r-1}) = 3(pq)^{2(r-1)} = 0.$$ But, this is not possible by Euler's theorem   because of the choice of primes.

\item If $e+h = (2-p^{q-1})(pq)^{r-1}+p^{q-1}$, then $\det G = (pq)^{r-1}$ and $(i)$ implies $$e\big((2-p^{q-1})(pq)^{r-1}+p^{q-1}-1\big) = (pq)^{r-1}.$$ We rewrite above as $e(t-1) = (pq)^{r-1}$, where $t = e+h$. From this equation, we conclude that $e$ is of the form $u+pre$ for some $u$ such that $u \equiv 0 \Mod p$ and $u \equiv 1 \Mod r$. 
Also $(ii)$ and $(iii)$ implies $ f(t-1)= 0$ and $g(t-1) = 0$. Thus, on solving these equations, we get $G = \begin{bmatrix}
u+pre(x) & prf(x) \\ pr g(x) & t-(u+pre(x))
\end{bmatrix}$, where $(u+pre(x))(t-(u+pre(x))) -(pr)^2f(x)g(x) \equiv (pq)^{r-1}.$

\item If $e+h = (-1-p^{q-1})(pq)^{r-1}+p^{q-1} $. Let $(-1-p^{q-1})(pq)^{r-1}+p^{q-1} = t$.  Then  $\det G = (pq)^{r-1}$ and $(i)$ implies $e(t-1) = (pq)^{r-1}$. Also $(ii)$ and $(iii)$ implies $f(t-1) = 0$ and $g(t-1) = 0$. Now on multiplying by $(t-1)^2$ on both sides of $(i)$, we get $$ (e(t-1))^2+f(t-1)g(t-1) = (t-1)(e(t-1)).$$ This further implies that $$((pq)^{r-1})^2 = (t-1)(pq)^{r-1}.$$ But under modulo $r$ above expression does not hold, as left side is congruent to $1$ and right side is congruent to $-2$. Hence, this case is not possible.

\item  If $e+h = (2-q^{p-1})(pq)^{r-1}+q^{p-1}$. This case is exactly similar to sub-case $5$. Just replace $p$ and $q$. 
\item If $e+h = (-1-q^{p-1})(pq)^{r-1}+q^{p-1} $. Similar to sub-case $6$, we can prove that this case is not possible.

\end{enumerate}
For $\det G=(qr)^{p-1}$ or $(pr)^{q-1}$, idempotents can be determined with the similar approach as applied for $\det G=(pq)^{p-1}$.

 Case 4:  $\det G = p^{(q-1)(r-1)}$.  In this case,  $(i), (iv)$ and $\det G = p^{(q-1)(r-1)}$ implies $(e+h)^2 = e+h+ 2p^{(q-1)(r-1)}$. On employing Lemma $3.2$, we get $8$ possibilities for $e+h$ and therefore, we discuss all these possibilities. 

\begin{enumerate}

\item If $e+h = 2p^{(q-1)(r-1)}$, then from equations $(ii)$ and $(iii)$, we get $2 p^{(q-1)(r-1)}f = f$ and $ \ 2p^{(q-1)(r-1)} g = g$. Now as $\gcd(2p^{(q-1)(r-1)}-1, pqr) = 1$, we get $f = g = 0$. So equations $(i)$ and $(iv)$ implies $e^2 = e$ and $h^2 = h$. It can be easily verified that the only possible value of $e$ and $h$ in this case is $p^{(q-1)(r-1)}$. Thus, $G = \begin{bmatrix}
p^{(q-1)(r-1)} & 0 \\ 0 & p^{(q-1)(r-1)}
\end{bmatrix}$.

\item If $e+h = 1+p^{(q-1)(r-1)}$, then $\det G = p^{(q-1)(r-1)}$ and $(i)$ implies $(p^{(q-1)(r-1)}) e = p^{(q-1)(r-1)}$. Also $(ii)$ and $(iii)$ implies $p^{(q-1)(r-1)} f = 0$ and $p^{(q-1)(r-1)}g = 0$. Thus, from these equations, we get $G = \begin{bmatrix}
1+qre(x) & qrf(x) \\ qrg(x) & p^{(q-1)(r-1)}-qre(x)
\end{bmatrix}$, where $e(x)(1+qre(x))+qrf(x)g(x) = pk(x)$ for some $k(x) \in  \mathbb{Z}_{pqr}[x]$. 
\item If $e+h = -p^{(q-1)(r-1)}$, then from equations $(ii)$ and $(iii)$, we get $(p^{(q-1)(r-1)}+1)f = 0$ and $ \ (p^{(q-1)(r-1)}+1)g = 0$. Now as $\gcd(p^{(q-1)(r-1)}+1, pqr) = 1$, we have $f = g = 0$. So equations $(i)$ and $(iv)$ implies $e^2 = e$ and $h^2 = h$. It can be easily verified that there are no two idempotents in $\mathbb{Z}_{pqr}$ whose sum is $-p^{(q-1)(r-1)}$. 
\item If $e+h = 1-2p^{(q-1)(r-1)}$, then $\det G = p^{(q-1)(r-1)}$ and $(i)$ implies $(2p^{(q-1)(r-1)}) e = -p^{(q-1)(r-1)}$. Also $(ii)$ and $(iii)$ implies $2p^{(q-1)(r-1)} f = 0$ and $2p^{(q-1)(r-1)}g = 0$. Now on multiplying by $(2p^{(q-1)(r-1)})^2$ on both sides of $(i)$, i.e. $e^2+fg = e$, we get $$(2p^{(q-1)(r-1)}e)^2 + (2p^{(q-1)(r-1)}f)(2p^{(q-1)(r-1)}g) = 2p^{(q-1)(r-1)} (2p^{(q-1)(r-1)}e).$$ On substituting values in above equation, we get $$(-p^{(q-1)(r-1)})^2 = 2p^{(q-1)(r-1)}(-p^{(q-1)(r-1)}) \implies 3p^{2(q-1)(r-1)} = 0.$$  But above is not possible by Euler's theorem as $p , q, r$ are greater than $3$. 
\end{enumerate}For the remaining values of trace, i.e.  $$\big\{((-1-2p^{q-1}\big)(pq)^{r-1} + 2 p^{q-1}),  \ \big((-2-p^{q-1})(pq)^{r-1}+p^{q-1}+1\big), $$ $$ \ ((-1-2p^{r-1})(pr)^{q-1}+2p^{r-1}), \ \big((-2-p^{r-1})(pr)^{q-1}+p^{r-1}+1\big)\big\},$$ we can prove  that these cases are not possible on the similar lines of sub-case $(4)$ above. So, there are only $2$ classes of idempotents having determinant $p^{(q-1)(r-1)}$. Further, the cases when $\det G = q^{(p-1)(r-1)}$ or $r^{(p-1)(q-1)}$ can be handled similarly.

Finally, on the other hand, one can see that all the matrices given in the statement of the theorem are idempotents. Thus, result.
\section{Discussion}
The study of idempotents is very important from the application point of view. In this paper, we have obtained all the possible idempotents of the matrix ring $M_2(\mathbb{Z}_{pqr}[x])$. This paper further motivates the study of non trivial idempotents of the matrix ring  $M_2(\mathbb{Z}_{n}[x])$, where $n$ is a square free integer having atleast $4$  prime factors.\end{proof}

\bibliographystyle{plain}

\end{document}